\begin{document}

\newcommand{\A}{{\mathbb A}}
\newcommand{\B}{{\mathbb B}}
\newcommand{\C}{{\mathbb C}}
\newcommand{\N}{{\mathbb N}}
\newcommand{\Q}{{\mathbb Q}}
\newcommand{\Z}{{\mathbb Z}}
\renewcommand{\P}{{\mathbb P}}
\renewcommand{\O}{{\mathcal O}}
\newcommand{\R}{{\mathbb R}}
\newcommand{\rc}{\subset}
\newcommand{\rank}{\mathop{rank}}
\newcommand{\trace}{\mathop{tr}}
\newcommand{\dimc}{\mathop{dim}_{\C}}
\newcommand{\Lie}{\mathop{Lie}}
\newcommand{\Spec}{\mathop{Spec}}
\newcommand{\Auto}{\mathop{{\rm Aut}_{\mathcal O}}}
\newcommand{\alg}[1]{{\mathbf #1}}
\newtheorem{lemma}{Lemma}[section]
\newtheorem*{definition}{Definition}
\newtheorem*{claim}{Claim}
\newtheorem{corollary}{Corollary}
\newtheorem*{Conjecture}{Conjecture}
\newtheorem*{SpecAss}{Special Assumptions}
\newtheorem{example}{Example}
\newtheorem*{remark}{Remark}
\newtheorem*{observation}{Observation}
\newtheorem*{fact}{Fact}
\newtheorem*{remarks}{Remarks}
\newtheorem{proposition}[lemma]{Proposition}
\newtheorem{theorem}[lemma]{Theorem}
\numberwithin{equation}{section}
\def\labelenumi{\rm(\roman{enumi})}
\title[Fundamental group and pluridifferentials ]{Fundamental group and pluridifferentials on compact K\"ahler manifolds}
%\shorttitle{ }
\author {Yohan Brunebarbe \& Fr\'ed\'eric Campana}
\begin{abstract}
A compact K\"ahler manifold $X$ is shown to be simply-connected if its `symmetric cotangent algebra' is trivial. Conjecturally, such a manifold should even be rationally connected. The relative version is also shown: a proper surjective connected holomorphic map $f:X\to S$ between connected manifolds induces an isomorphism of fundamental groups if its smooth fibres are as above, and if $X$ is K\"ahler.\end{abstract}
\subjclass{}%
%AMS Subject Classification:}
% http://www.ams.org/msc/
%
\address{%
Yohan Brunebarbe \\
EPFL\\
Lausanne\\
Suisse
}
%\email{}%
\address{%
Fr\'ed\'eric Campana\\
Institut Elie Cartan\\
Universit\'e de Lorraine\\
France\\
et Institut Universitaire de France
}

\maketitle
\section{Introduction}

We shall show:

\begin{theorem}\label{main theorem}
Let $X$ be a connected compact K\"ahler manifold. Suppose that for all $p \geq 1$ and $k \geq 1$ there is no non-zero global section of the sheaf $S^k \Omega_X^p$. Then $X$ is simply connected\footnote{By a theorem of Kodaira, any $X$ as above is actually projective.}. 
\end{theorem}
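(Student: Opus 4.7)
The plan is to prove $\pi_1(X) = 1$ in two stages: rule out nontrivial finite \'etale covers of $X$, then rule out infinite fundamental groups.

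For the first stage, specialising the hypothesis to $k = 1$ gives $H^0(X, \Omega_X^p) = 0$ for every $p \geq 1$; Hodge symmetry on the compact K\"ahler manifold $X$ then yields $h^{0, p} = h^{p, 0} = 0$ for $p \geq 1$, so $\chi(X, \O_X) = 1$. For any finite \'etale cover $\pi: Y \to X$ of degree $d$, multiplicativity of the holomorphic Euler characteristic gives $\chi(Y, \O_Y) = d$, so showing that the hypothesis propagates to $Y$ forces $d = 1$. Propagation rests on a symmetric tensor power trick: if $L$ is a torsion line bundle on $X$ with $L^{\otimes n} \cong \O_X$ and $0 \neq s \in H^0(X, S^k \Omega_X^p \otimes L^{-i})$, then the $n$-th symmetric power $s^n$, pushed through the natural multiplication $S^n(S^k \Omega_X^p) \to S^{nk} \Omega_X^p$, yields a nonzero section of $S^{nk} \Omega_X^p \otimes L^{-ni} = S^{nk} \Omega_X^p$, contradicting the hypothesis. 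Since $\pi_* \O_Y = \bigoplus_\chi L_\chi$ decomposes as a sum of torsion line bundles for an abelian Galois cover, this handles the abelian case at once. For a general finite \'etale cover I would pass to the Galois closure, decompose $\pi_* \O_Y$ by irreducible representations of the Galois group, and run a refined symmetric-power argument; Burnside-Brauer (the trivial representation appears in a suitable symmetric power of any faithful finite-group representation) should yield the propagation. The main technical difficulty here is checking that the projection of the symmetrized section onto the trivial summand remains nonzero, a point that should reduce to a fibrewise linear-algebra analysis.

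For the second stage, I would invoke a theorem of Brunebarbe-Klingler-Totaro: if $X$ and all its finite \'etale covers satisfy $H^0(\cdot, S^k \Omega^1) = 0$ for every $k \geq 1$, then every linear representation $\pi_1(X) \to GL_n(\C)$ has finite image, and is then trivial by stage one. To upgrade this to triviality of $\pi_1(X)$ itself, I would combine Simpson's nonabelian Hodge correspondence (semisimple representations correspond to polystable Higgs bundles), a theorem of Brunebarbe linking non-isotrivial polarised variations of Hodge structures on $X$ to nonzero symmetric pluridifferentials, and the Deligne-Griffiths-Morgan-Sullivan formality theorem, which combined with $h^{1, 0}(X) = 0$ trivialises the rational Malcev completion of $\pi_1(X)$ and eliminates unipotent contributions. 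The higher-$p$ vanishing $H^0(X, S^k \Omega_X^p) = 0$ for $p \geq 2$ should play an essential role in ruling out contributions coming from fibrations to lower-dimensional bases. The deepest step --- and the main obstacle I anticipate --- is bridging from \emph{every linear quotient of $\pi_1(X)$ is trivial} to \emph{$\pi_1(X)$ itself is trivial}, which requires genuine K\"ahler-group-theoretic input beyond linear representation theory.
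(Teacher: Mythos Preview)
Your first stage is close in spirit to the paper's but unnecessarily elaborate. You only need $h^{0,p}(Y)=0$ for $p\ge1$, not the full propagation of all $H^0(Y,S^k\Omega_Y^p)=0$, and the paper handles this in one line for a finite Galois \'etale cover $X'\to X$ with group $G$ of order $d$: given $0\ne\omega\in H^0(X',\Omega_{X'}^p)$, the product $\prod_{g\in G}g^*\omega$ is a nonzero $G$-invariant element of $H^0(X',S^d\Omega_{X'}^p)$ (nonzero because the symmetric algebra of a vector space is an integral domain), hence descends to a nonzero section of $S^d\Omega_X^p$. No decomposition of $\pi_*\O_Y$, no Burnside--Brauer, no worry about whether a projected section survives.

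The real problem is your second stage, and here there is a genuine gap that you yourself flag but cannot close. Passing from ``every linear representation of $\pi_1(X)$ is trivial'' to ``$\pi_1(X)$ is trivial'' amounts to knowing that K\"ahler groups are residually linear; this (indeed, even residual finiteness) is a well-known open problem, and none of the tools you list addresses it. Formality together with $h^{1,0}=0$ only kills the rational pro-nilpotent completion, and nonabelian Hodge theory only sees linear representations; both leave room for an infinite K\"ahler group with no nontrivial linear quotient at all. Your proposed route is blocked by exactly the obstacle you name.

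The paper bypasses representation theory entirely by an $L^2$ argument on the universal cover $\tilde X$. Atiyah's $L^2$-index theorem gives $\chi(X,\O_X)=\sum_q(-1)^q\dim_\Gamma\mathcal H^{0,q}_{(2)}(\tilde X)$. If $\pi_1(X)$ is infinite then $\tilde X$ is noncompact, so there are no nonzero $L^2$ holomorphic functions; hence if $\chi(X,\O_X)\ne0$ some $\mathcal H^{0,p}_{(2)}(\tilde X)$ with $p\ge1$ is nonzero, and by conjugation there is a nonzero $L^2$ holomorphic $p$-form $s$ on $\tilde X$. A Poincar\'e-series construction (view $s$ as a section of $\O_{\Omega^p}(1)$ on the projectivised bundle, square it to get an $L^1$ section, then average powers over $\Gamma$; Gromov's lemma on $\ell^1$ sequences guarantees some average is nonzero) then produces a nonzero $\Gamma$-invariant section of some $\O_{\Omega^p}(2k)$, i.e.\ a nonzero element of $H^0(X,S^{2k}\Omega_X^p)$. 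The remaining case $\chi(X,\O_X)=0$ already forces some $H^0(X,\Omega_X^p)\ne0$ directly from Hodge symmetry on $X$. This $L^2$-index plus Poincar\'e-series step is the heart of the proof and is precisely the K\"ahler-group-theoretic input your outline is missing.
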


This theorem refines a former result of \cite{Campana95} with the very same statement, but with $\otimes ^k \Omega_X^p$ in place of $S^k \Omega_X^p$. The proof  of \ref{main theorem} is obtained by refining the proof of \cite{Campana95}, which rests on $L^2$-methods \`a la Poincar\'e-Atiyah-Gromov.

The `uniruledness conjecture' below implies easily (see \S \ref{abuconj}) that $X$ should, in fact, be rationally connected, hence simply-connected, by \cite{Campana91}. Theorem \ref{main theorem} above permits to bypass this conjecture, as far as the fundamental group is concerned. It is usually quite easy to verify the vanishings of all $S^k \Omega_X^p$, while constructing sufficiently many rational curves requires the characteristic $p>0$ methods introduced by S. Mori, no characteristic zero proof being presently known.

The weaker assumption that $H^0(X, S^k \Omega_X^1)=\{0\}$ for every $k  \geq 1$ implies (see \cite{BKT}) that all linear representations of the fundamental group $\pi_1(X) \to GL_n(K)$, $K$ a field, have finite image. This raises the question of whether the condition $H^0(X, S^k \Omega_X^1)=\{0\}$ for every $k  \geq 1$ might imply that $\pi_1(X)$ is finite, instead of trivial. Enriques surfaces (examples of general type also exist) indeed show that simple-connectedness may then fail\footnote{Hopf surfaces $X$ have $H^0(X, S^k \Omega_X^p)=\{0\},\forall k>1,p>1$, showing that the K\"ahler assumption cannot be removed in \ref{main theorem}, since $\pi_1(X) \cong \Z$.}.

In contrast to the condition $H^0(X, S^k \Omega_X^p)=\{0\}$ for every $k \geq 1$ and $p \geq 1$, the condition $H^0(X, S^k \Omega_X^1)=\{0\}$ for every $k \geq 1$ does not seem however to have an even conjectural geometric interpretation in the frame of bimeromorphic classification of compact K\"ahler manifolds.

\

The theorem \ref{main theorem} above has a relative version, shown in section \S \ref{corollary} below:

\begin{corollary}\label{relative version} Let $f: X\to S$ be a proper holomorphic map with connected fibres between connected complex manifolds. Assume\footnote{These hypothesis should imply that $f$ is projective, locally above $S$.} that $X$ admits a K\"ahler metric, and that $f_*(S^k(\Omega^p_{X/S}))=0$ for every $k \geq 1$ and $p \geq 1$. Then $f_*:\pi_1(X)\to \pi_1(S)$ is an isomorphism of groups.
\end{corollary}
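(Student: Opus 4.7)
I would reduce the corollary to Theorem~\ref{main theorem} applied to a general smooth fibre of $f$, and conclude via the standard exact sequence of fundamental groups for a proper surjective holomorphic map with connected fibres.

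By generic smoothness, there is a dense Zariski-open $S_0 \subset S$ (the complement of the discriminant of $f$) over which $f$ is a submersion. For each $s \in S_0$, the fibre $F_s := f^{-1}(s)$ is a smooth compact complex submanifold of $X$, and the restriction of a K\"ahler form on $X$ makes it compact K\"ahler.

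Next, I would show that for $s$ in a dense subset of $S_0$, one has $H^0(F_s, S^k \Omega^p_{F_s}) = 0$ for every $k, p \geq 1$. Over $S_0$ the sheaves $S^k \Omega^p_{X/S}$ are locally free and restrict fibrewise to $S^k \Omega^p_{F_s}$. Upper semi-continuity of $s \mapsto h^0(F_s, S^k \Omega^p_{F_s})$ together with Grauert's base-change theorem yields a dense open $U_{k,p} \subset S_0$ on which $f_*(S^k \Omega^p_{X/S})$ is locally free with fibre $H^0(F_s, S^k \Omega^p_{F_s})$; the hypothesis $f_*(S^k \Omega^p_{X/S}) = 0$ then forces this common value to vanish. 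The intersection $\bigcap_{k,p \geq 1} U_{k,p}$ is dense by the Baire category theorem, so for a general $s$ the smooth compact K\"ahler fibre $F := F_s$ satisfies the hypotheses of Theorem~\ref{main theorem}; hence $\pi_1(F) = 1$.

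Finally, I would invoke the standard exact sequence
\[
\pi_1(F) \longrightarrow \pi_1(X) \stackrel{f_*}{\longrightarrow} \pi_1(S) \longrightarrow 1
\]
attached to a proper surjective holomorphic map with connected fibres and a general smooth fibre $F$ (see e.g.\ Koll\'ar, \emph{Shafarevich Maps and Automorphic Forms}, Thm.~2.10). Combined with $\pi_1(F) = 1$, this gives the desired isomorphism $f_* : \pi_1(X) \xrightarrow{\sim} \pi_1(S)$.

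The essential new input is of course Theorem~\ref{main theorem}. The main obstacle is either the base-change step, where one must carefully identify fibrewise cohomology with stalks of the direct image in a possibly non-flat situation (which we bypass by restricting to $S_0$), or the verification of the $\pi_1$-exact sequence in the present analytic, non-necessarily-compact setting, whose standard proof requires analyzing loops around the singular fibres of $f$; both are essentially routine but merit explicit treatment.
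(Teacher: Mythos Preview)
Your reduction to Theorem~\ref{main theorem} for a general smooth fibre is fine and coincides with the paper's first step. The gap is in the last step: the sequence
\[
\pi_1(F)\longrightarrow \pi_1(X)\xrightarrow{\ f_*\ }\pi_1(S)\longrightarrow 1
\]
is \emph{not} exact for an arbitrary proper surjective holomorphic map with connected fibres, the obstruction being multiple fibres. The paper itself supplies a counterexample in the introduction: from an Enriques surface $Y$ with K3 double cover $Y'\to Y$ and a hyperelliptic cover $C\to\mathbb{P}^1$, one forms $X=(C\times Y')/\mathbb{Z}_2\to\mathbb{P}^1$; the smooth fibres are K3 surfaces, hence simply connected, yet $\pi_1(X)$ is a nontrivial $\mathbb{Z}_2$-extension of $\pi_1(C)$. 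Thus $\pi_1(F)=1$ alone cannot force $f_*$ to be injective, and the verification you flag as ``essentially routine'' is in fact the heart of the problem.

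What the paper does instead is prove an intermediate result (Theorem~\ref{relative}): the conclusion holds once the smooth fibre is simply connected \emph{and} satisfies $H^p(X_s,\mathcal{O}_{X_s})=0$ for all $p>0$. This second condition, which follows from the hypotheses of the corollary via Hodge symmetry, is used in an essential way. After reducing to a family over a disk with a single singular fibre, made simple normal crossings by blowing up, one invokes Koll\'ar's lemma that $\pi_1(X)$ is then finite cyclic of some order $d$, passes to the universal cover $\tilde X\to X$ with Stein factorisation $g:\tilde X\to\Delta$, and combines torsion-freeness of $R^pg_*\mathcal{O}_{\tilde X}$ with Steenbrink's mixed-Hodge lemma to get $\chi(\tilde X_0^{\mathrm{red}},\mathcal{O})=1$. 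Multiplicativity of $\chi(\mathcal{O})$ in the \'etale cover $\tilde X_0^{\mathrm{red}}\to X_0^{\mathrm{red}}$ then forces $d=1$. This Euler-characteristic argument is precisely what excludes the multiple-fibre phenomenon and is the substantive content missing from your sketch.
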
 

Note that the conclusion of corollary \ref{relative version} may fail for a projective morphism $f:X\to S$ with smooth fibres simply-connected, because of the possible presence of multiple fibres. Consider indeed an Enriques surface $Y$ and its $K3$ universal cover $Y'\to Y=Y'/\Z_2$. Let $C\to \Bbb P^1=C/\Z_2$ be the $2$-sheeted cover defined by a hyperelliptic curve $C$. Now let $X\to S:=\Bbb P^1$ be deduced from the first projection $X':=C\times Y'\to C$ by taking the equivariant quotient by the involution $u\times v$ acting freely on $X'$, $u$ and $v$ being the involutions on $Y'$ and $C$ respectively deduced from the $\Z_2$ covers above. Here $S=\Bbb P^1$ is simply connected although $\pi_1(X)$ is a $\Z_2$ extension of $\pi_1(C)$ and the smooth fibres of $f$ are simply-connected.

%%%%%%%%%%%%%%%%%%%%%%%%%%%%%%%%%%%%%%%%%%%%%%%%%%%%%%%%%%%%%%%%%%%%%%%%%%%%%%%%%%%%%%%%%%%%%%%%

\section{Proof of theorem \ref{main theorem}}

As in \cite{Campana95}, the proof goes in two steps: show first that $\pi_1(X)$ is finite (this is the main step, established below), and then show, using Serre's covering trick, that $\pi_1(X)$ is in fact trivial. 

We start by establishing this second step. Let $\pi : X' \to X$ be a finite Galois \'etale cover of $X$ of group $G$ and degree $d$. The Euler characteristic of the structural sheaf of $X$ 
\begin{equation*}
\chi (X , \mathcal{O}_X ) := \sum_{i = 0}^{\dim X} (-1)^i \cdot h^{i}(X, \mathcal{O}_X )
\end{equation*}

is equal to $1$, since by Serre's duality $h^{i}(X, \mathcal{O}_X) = h^0(X , \Omega_X^{i})$, and the latter is zero for $i \neq 0$ by hypothesis.

Now, if $\omega \in H^0(X', \Omega_{X'}^{i}) $, the product of the $g^* \omega$ for $g\in G$ defines an element of $H^0(X', S^d \Omega_{X'}^{i}) $ invariant by the action of $G$. We obtain in this way a global section of $S^d \Omega_X^{i} $, which is non zero  if $\omega$ is non zero. Thus it follows from the hypothesis that we must also have $\chi (X' , \mathcal{O}_{X'} ) = 1$. 

From the multiplicativity of the Euler characteristic (see lemma \ref{multiplicativity Euler characteristic} below), we get: $$ 1 =  \chi (X' , \mathcal{O}_{X'} ) = d \cdot \chi (X , \mathcal{O}_X ),$$
and $d$ is then necessarly equal to $1$.

\begin{lemma}\label{multiplicativity Euler characteristic} Let $X' \to X$ be a finite \'etale covering of degree $d$ of compact complex analytic spaces. Then
\[  \chi (X' , \mathcal{O}_{X'} ) = d \cdot \chi (X , \mathcal{O}_X ). \]
\end{lemma}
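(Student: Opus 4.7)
The plan is to reduce to the Hirzebruch-Riemann-Roch formula in two steps. First, since $\pi$ is a finite morphism, $R^q\pi_* \mathcal{F} = 0$ for all $q > 0$ and every coherent sheaf $\mathcal{F}$ on $X'$. Applied to $\mathcal{F} = \mathcal{O}_{X'}$, the Leray spectral sequence collapses, giving canonical isomorphisms $H^i(X', \mathcal{O}_{X'}) \cong H^i(X, \pi_*\mathcal{O}_{X'})$ for every $i$, and hence $\chi(X', \mathcal{O}_{X'}) = \chi(X, \pi_*\mathcal{O}_{X'})$.

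Since the lemma will be invoked in this paper only with $X$ a compact K\"ahler manifold (so $X$ and $X'$ are both smooth), my second step would be Hirzebruch-Riemann-Roch on $X'$:
\[
\chi(X', \mathcal{O}_{X'}) \;=\; \int_{X'} \mathrm{td}(T_{X'}).
\]
Because $\pi$ is \'etale, the relative tangent sheaf vanishes and $T_{X'} \cong \pi^*T_X$, whence $\mathrm{td}(T_{X'}) = \pi^*\mathrm{td}(T_X)$. Since a finite cover of degree $d$ multiplies integrals of pulled-back top-degree classes by $d$, it follows that
\[
\int_{X'} \pi^*\mathrm{td}(T_X) \;=\; d\cdot\int_X \mathrm{td}(T_X) \;=\; d\cdot\chi(X, \mathcal{O}_X),
\]
which together with the first step yields the claimed equality.

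The only non-trivial input is HRR itself; the structural point is that \'etale maps preserve every infinitesimal invariant (tangent bundle, Todd class) while multiplying global integrals by the degree. For the stated full generality of possibly singular compact analytic spaces, one would instead have to appeal to the Baum-Fulton-MacPherson singular Riemann-Roch theorem (or reduce to the smooth case via a simultaneous resolution $\widetilde{X}' \to \widetilde{X}$, noting that $\chi(\mathcal{O})$ need not be a birational invariant in the singular setting and so some care is required). This refinement is, however, not needed for the application of the lemma in the present paper.
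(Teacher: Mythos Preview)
Your argument in the smooth case is correct and is essentially the paper's own approach: the paper simply cites Kleiman's elementary proof for the projective case and Levy's Riemann--Roch--Hirzebruch for general compact complex analytic spaces. One minor remark: your first step (the Leray spectral sequence identity $\chi(X',\mathcal{O}_{X'})=\chi(X,\pi_*\mathcal{O}_{X'})$) is never actually used, since your second step already computes $\chi(X',\mathcal{O}_{X'})$ directly on $X'$ and compares it to $\chi(X,\mathcal{O}_X)$ via HRR on both spaces; the first step would only enter if you instead applied HRR on $X$ to the locally free sheaf $\pi_*\mathcal{O}_{X'}$.

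There is, however, a genuine gap in your assessment of what the paper requires. You assert that the lemma is invoked only with $X$ a compact K\"ahler manifold, so that the singular case ``is not needed for the application of the lemma in the present paper.'' This is false: the paper's own footnote to the lemma states that the case needed for Corollary~\ref{relative version} is when $X$ is a divisor with normal crossings in a K\"ahler manifold. Concretely, in the proof of Theorem~\ref{relative} the lemma is applied to the degree-$d$ \'etale cover $\tilde{X}_0^{red}\to X_0^{red}$ of reduced special fibres, which are singular SNC spaces, to deduce $1=\chi(\tilde{X}_0^{red},\mathcal{O})=d\cdot\chi(X_0^{red},\mathcal{O})$ and hence $d=1$. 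So the singular version---handled in the paper via Levy's RRH for compact complex analytic spaces, or as you note via Baum--Fulton--MacPherson---is genuinely required, and your dismissal of it is an error.
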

\begin{proof}
When $X$ is projective, an elementary proof due to Kleiman is given in \cite{Lazarsfeld1}, exemple 1.1.30. In general, it is an easy consequence of the theorem of Riemann-Roch-Hirzebruch, which is proved in \cite{Levy} for compact complex analytic spaces\footnote{We shall only need the case when $X$ is a divisor with normal crossings in a complex K\"ahler manifold in the proof of corollary \ref{relative version}.}. 
\end{proof}

\paragraph{}
To complete the proof of theorem \ref{main theorem}, we need to show that the fundamental group of $X$ is finite. Equivalently, we have to show the

\begin{theorem}\label{infinite fundamental group}
Let $X$ be a connected compact K\"ahler manifold with infinite fundamental group. Then there exists $p \geq 1$ and $k \geq 1$ such that $H^0(X, S^k \Omega_X^p) \neq \{0\}$.
\end{theorem}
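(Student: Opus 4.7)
My plan is to argue by contradiction, using Atiyah's $L^2$-index theorem on the universal cover $\tilde X \to X$ to produce a nonzero $L^2$-holomorphic form, and then a Poincar\'e-series construction exploiting \emph{all} symmetric powers simultaneously to produce the required section on $X$.

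Suppose $H^0(X, S^k\Omega_X^p) = 0$ for all $k, p \geq 1$; specializing to $k=1$ and using K\"ahler Hodge symmetry gives $h^{0,p}(X) = h^{p,0}(X) = 0$ for $p \geq 1$, hence $\chi(X, \O_X) = 1$. Let $\Gamma := \pi_1(X)$ act on $\tilde X$ by deck transformations, and equip $\tilde X$ with the complete lifted K\"ahler metric. Atiyah's $L^2$-index theorem yields
$$\sum_{i=0}^{\dim X} (-1)^i h^{0,i}_{(2)}(\tilde X) \;=\; \chi(X, \O_X) \;=\; 1,$$
where $h^{0,i}_{(2)}$ is the $\Gamma$-von-Neumann dimension of the space of $L^2$-harmonic $(0,i)$-forms. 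Since $\tilde X$ is complete and non-compact, a standard cutoff / integration-by-parts argument rules out nonzero $L^2$-harmonic functions, so $h^{0,0}_{(2)}(\tilde X) = 0$; hence $h^{0,i}_{(2)}(\tilde X) > 0$ for some $i \geq 1$. Via the $L^2$ Hodge decomposition on complete K\"ahler manifolds and complex conjugation we obtain a nonzero $L^2$-holomorphic $i$-form $\omega \in H^0_{(2)}(\tilde X, \Omega_{\tilde X}^i)$ with $i \geq 1$.

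For each $k \geq 2$ I form the Poincar\'e series
$$\sigma_k \;:=\; \sum_{\gamma \in \Gamma} \gamma^*(\omega^{\odot k}) \;\in\; H^0(\tilde X, S^k \Omega_{\tilde X}^i)^{\Gamma},$$
where $\odot$ is the symmetric tensor product. The Bergman-type inequality $|\omega|^2(x) \leq C \int_{B(x, r_0)} |\omega|^2 \, dV$ (with $C, r_0$ uniform by compactness of $X$) gives $\|\omega\|_{L^\infty(\tilde X)} < \infty$; combined with bounded overlap of the $\Gamma$-translates of $B(x, r_0)$, this yields $\sum_\gamma |\gamma^*\omega(x)|^k \leq C' \|\omega\|_{L^\infty}^{k-2} \|\omega\|_{L^2}^2 < \infty$ uniformly in $x$. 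Hence $\sigma_k$ converges absolutely and locally uniformly to a holomorphic $\Gamma$-invariant section, descending to a section of $S^k \Omega_X^i$ on $X$. The main task is to show that $\sigma_k \neq 0$ for at least one $k \geq 2$. Suppose all $\sigma_k \equiv 0$, pick $x \in \tilde X$ with $\omega(x) \neq 0$, and set $V := \Omega_{\tilde X, x}^i$, $v_\gamma := (\gamma^*\omega)(x) \in V$. For any $\xi \in V^*$ the scalars $w_\gamma := \langle v_\gamma, \xi\rangle$ satisfy $\sum_\gamma |w_\gamma|^2 < \infty$ and, by pairing $\sum_\gamma v_\gamma^{\odot k} = 0$ with $\xi^{\odot k}$,
$$\sum_{\gamma \in \Gamma} w_\gamma^k \;=\; 0 \quad \text{for every } k \geq 2.$$
Consider the Weierstrass product $P(t) := \prod_\gamma (1 + tw_\gamma)\, e^{-tw_\gamma}$; since $\sum |w_\gamma|^2 < \infty$, this converges to an entire function of $t$ whose zero set is $\{-1/w_\gamma : w_\gamma \neq 0\}$. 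Its logarithm expands as $\log P(t) = \sum_{k \geq 2}(-1)^{k-1} t^k \bigl(\sum_\gamma w_\gamma^k \bigr)/k \equiv 0$, so $P \equiv 1$, which forces every $w_\gamma = 0$. Varying $\xi \in V^*$ gives $v_\gamma = 0$ for all $\gamma$, in particular $\omega(x) = v_e = 0$, a contradiction.

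The hardest step is this final Weierstrass-type moment argument, which is precisely where the refinement from \cite{Campana95} lives: the analogous construction at the level of $\otimes^k \Omega_X^p$ needs only a single tensor-power identity, but producing a nonzero element of $S^k \Omega_X^i$ requires exploiting all vanishings $\sum_\gamma w_\gamma^k = 0$, $k \geq 2$, simultaneously in order to rule out the complex cancellations allowed by the symmetric bracket.
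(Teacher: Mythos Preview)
Your proof is correct and follows essentially the same route as the paper's: Atiyah's $L^2$-index theorem on $\tilde X$ produces a nonzero $L^2$-holomorphic $p$-form, and a Poincar\'e-series averaging combined with a power-sum nonvanishing argument yields the required section of some $S^k\Omega_X^p$. The only difference is packaging: the paper passes to the line bundle $\O_E(1)$ on $\P(E)$ (with $E=\Omega_{\tilde X}^p$), squares the $L^2$ section to obtain an $L^1$ one, and then invokes Gromov's $\ell^1$ lemma (Lemma~\ref{L1}), whereas you stay in the vector bundle, reduce to scalars by pairing with $\xi^{\odot k}$, and prove the needed $\ell^2$ version of that lemma directly via the genus-one Weierstrass product $\prod_\gamma(1+tw_\gamma)e^{-tw_\gamma}$.
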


\begin{proof}

Let $p : \tilde{X} \to X$ be the universal cover of $X$. The fundamental group $\Gamma := \pi_1(X)$ acts on $\tilde{X}$. The choice of a K\"ahler metric on $X$ induces a complete K\"ahler metric on $\tilde{X}$. Denote by $\mathcal{H}_{(2)}^k (\tilde{X})$ the Hilbert space of $L^2$-harmonic complex-valued forms of degree $k$ on $\tilde{X}$. Recall that a $p$-form $\alpha$ is called harmonic if $\Delta \alpha = 0$, where $\Delta := d \circ d^* + d^* \circ d$ and $d^* := -* \circ \,  d \circ *$. Moreover, a $L^2$ $p$-form $\alpha$ is harmonic if and only if $d \alpha = 0$ and $d^* \alpha = 0$ (the metric being complete), if and only if $\overline{\partial} \alpha = 0$ and $\overline{\partial}^*\alpha = 0$ (the metric being complete and K\"ahler), see \cite{Gromov}.

The decomposition in types gives rise to a orthogonal sum 
\[ \mathcal{H}_{(2)}^k (\tilde{X}) = \bigoplus_{p+q = k} \mathcal{H}_{(2)}^{p,q} (\tilde{X}).\]
The space $\mathcal{H}_{(2)}^{p,0} (\tilde{X}) $ consists of the $L^2$-holomorphic $p$-forms on $\tilde{X}$.

The Hilbert spaces $\mathcal{H}_{(2)}^{p,q}  (\tilde{X}) $ might be infinite dimensional. Nevertheless, using the isometric action of $\Gamma$ on them, one can associate to them a non-negative real number $\dim_{\Gamma}(\mathcal{H}_{(2)}^{p,q}  (\tilde{X}) )$ (cf. \cite{Atiyah}). This number is zero if and only if $ \mathcal{H}_{(2)}^{p,q}  (\tilde{X}) = \{0\} $. 

By Atiyah's $L^2$-index theorem (cf. \cite{Atiyah, Gromov}), we know that 
\begin{equation*}\label{index}
  \chi (X , \mathcal{O}_X )  =  \chi_{(2)} (\tilde{X} , \mathcal{O}_{\tilde{X}} ) := \sum_{q=0}^{\dim X} (-1)^q \cdot \dim_{\Gamma}(\mathcal{H}_{(2)}^{0,q}(\tilde{X}) )
\end{equation*}

Observe that there are no non-zero $L^2$-holomorphic functions on $\tilde{X}$. Indeed, the metric being complete, any harmonic function is closed, hence locally constant. By hypothesis $\tilde{X}$ is non-compact, and any constant $L^2$ function has to be zero.   \\
Let us distinguish two cases. Suppose first that $   \chi (X , \mathcal{O}_X )  = 0 $. Since $ \dim H^0(X, \mathcal{O}_X ) = 1$, Hodge symmetry shows that $H^0(X,  \Omega_X^p) \neq \{0\}$, for some (odd) $p \geq 1$, and the theorem is proved in this case. If, now, $\chi (X , \mathcal{O}_X )\neq 0 $, it follows from the discussion above that there exists $p \geq 1$ such that $\mathcal{H}_{(2)}^{0,p}(\tilde{X}) \neq \{0 \} $. By conjugation $\mathcal{H}_{(2)}^{p,0}(\tilde{X}) \neq \{0 \} $, hence we get a non-zero $L^2$-holomorphic $p$-form for some $p \geq 1$.

The rest of the proof consists, following \cite{Gromov}, in constructing from this $L^2$ section a non-zero $\Gamma$-invariant section of some $S^k \Omega_{\tilde{X}}^p$. This can be done using a construction which goes back to Poincar\'e, that we now describe in a general setting.

\paragraph{}
Let $M$ be a complex manifold and $E$ be a holomorphic vector bundle on $M$. Let $\Gamma$ be a countable discrete group acting on $M$ and suppose that the action of $\Gamma$ lifts to an action on $E$. Let $h_E$ be a $\Gamma$-invariant continuous hermitian metric on $E$. Let $\Phi : \Bbb P(E) \to M$ denote the projective bundle of hyperplanes in $E$ and $  \mathcal{O}_{E}(1) \to \Bbb P(E) $ be the tautological line bundle endowed with the induced hermitian metric $h_L$. By functoriality the group $\Gamma $ acts on $\Bbb P(E)$ and $\mathcal{O}_E(1)$, and all the maps considered above are $\Gamma$-equivariant. As $\Phi_* (\mathcal{O}_E(k)) = S^k E$ for all $k \geq 1$ (where $\mathcal{O}_E(k)   $ denotes the line bundle $\mathcal{O}_E(1)^{\otimes k}$), there is a $\Gamma$-equivariant identification between the space of holomorphic sections $H^0(\Bbb P(E), \mathcal{O}_E(k)) = H^0(M, S^k E)$ under which $L^q$ holomorphic sections are identified for all $q \geq 1$.

To any $L^1$ holomorphic section $s$ of $E$ we can associate a $\Gamma$-invariant section of $S^k E$ for all $k \geq 1$ (the so-called Poincar\'e series) as follows :

\begin{equation*}
P_k(s)(x) := \sum_{\gamma \in \Gamma} \gamma^{*} s^k (\gamma \cdot x)
\end{equation*}

As $s$ is $L^1$, this series converges absolutely to a $\Gamma$-invariant holomorphic section of $S^k E$.

Moreover, if $s$ is not the zero section, then $P_k(s)$ is non-zero for infinitely many $k \geq 1$. Indeed, the precededing construction shows that we need only to consider the case where $E$ is a line bundle. The assertion is then a consequence of the following lemma.

\begin{lemma}\label{L1} (See Lemma 3.2.A from \cite{Gromov})
Let $\{a_i\}$ be an $l^1$-sequence of complex numbers, not all zero. Then there are infinitely many $k \geq 1$ such that $\sum_{i} a_i^k \neq 0$. 
\end{lemma}

Now recall that in the case where $   \chi (X , \mathcal{O}_X )\neq 0$, we showed the existence of a non-zero $L^2$ section $s$ of $ \Omega_{\tilde{X}}^p$ for some $ p>0$. If we see $s$ as a section of the tautological line bundle $\mathcal{O}_{\Omega_{\tilde{X}}^p}(1)$ on the projectified bundle of $\Omega_{\tilde{X}}^p$, then $s^{\otimes k}$ is a non-zero $L^1$ section of $\mathcal{O}_{\Omega_{\tilde{X}}^p}(1)$ for any $k\geq 2$. Applying the averaging construction just described to $s^{\otimes 2}$, we get a non-zero $\Gamma$-invariant section of some $\mathcal{O}_{\Omega_{\tilde{X}}^p}(2k)$, giving a non-zero section of $S^{2k} \Omega_{X}^p$, as claimed. This concludes the proof\footnote{We thank C. Mourougane for observing  that in our first version, our construction appeared to give a section of $S^k(S^2(\Omega_{X}^p))$, instead of $S^{2k}(\Omega_{X}^p)$.}. \end{proof}

\begin{remark} For any compact connected K\"ahler manifold $X$ with infinite fundamental group, let $P(X)$ (resp. $ P_{(2)}(X)$) be the set of integers $p$ such that $ H^0(X,S^k\Omega_X^p)\neq \{0\} $ for some $k>0$ (resp. such that $ H^0_{(2)}(X',S^k\Omega_{X'}^p)\neq \{0\}$ for some $k>0$ and some infinite connected \'etale cover $X'$ of $X$). The arguments above show that $P_{(2)}(X)\subset P(X)$. Complex tori show that this inclusion can be strict.
\end{remark}

%%%%%%%%%%%%%%%%%%%%%%%%%%%%%%%%%%%%%%%%%%%%%%%%
\section{A criterion for rational connectedness.}\label{abuconj}

Recall the following consequence of the `Abundance Conjecture'

\begin{Conjecture}\label{cur}(`uniruledness' conjecture) Let $X$ be a connected compact K\"ahler manifold. Then  $X$ is uniruled (i.e. covered by rational curves) if and only if $H^0(X, K_X^{\otimes k})=\{0\}$ for all $k>0$.
\end{Conjecture}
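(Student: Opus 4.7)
The plan is to establish the biconditional by combining the Abundance Conjecture with the Boucksom--Demailly--P\u{a}un--Peternell (BDPP) characterization of uniruledness via non-pseudo-effectivity of the canonical bundle.

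I would first dispose of the easy (unconditional) implication: if $X$ is uniruled, then $H^0(X, K_X^{\otimes k}) = \{0\}$ for all $k \geq 1$. The idea is to pick a family of rational curves covering $X$; a general member $\nu : \P^1 \to X$ of such a family is free, so $\nu^* T_X$ is generated by global sections and $\deg \nu^*(K_X^{\otimes k}) \leq -2k < 0$. Any global section of $K_X^{\otimes k}$ must therefore vanish along such a curve, and since the curves cover $X$, the section must vanish identically.

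For the converse, I would argue by contradiction. Suppose $H^0(X, K_X^{\otimes k}) = \{0\}$ for all $k \geq 1$, equivalently $\kappa(X) = -\infty$, but $X$ is not uniruled. By the BDPP theorem, non-uniruledness forces $K_X$ to be pseudo-effective. The Abundance Conjecture, in the form $\kappa(X) = \nu(X)$ with $\nu(X)$ the numerical dimension of $K_X$, then yields $\kappa(X) = \nu(X) \geq 0$, contradicting $\kappa(X) = -\infty$.

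The main obstacle is that both ingredients are themselves major open problems, and neither is fully established in the K\"ahler setting: the Abundance Conjecture is open in general, while BDPP is proved only for projective varieties, so the K\"ahler analogue -- uniruledness equivalent to non-pseudo-effectivity of $K_X$ -- is itself conjectural. The plan therefore only yields a conditional proof, which is precisely why the statement is labeled a conjecture rather than a theorem.
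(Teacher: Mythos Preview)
The paper gives no proof of this statement: it is explicitly stated as a \emph{conjecture}, preceded only by the remark that it is ``a consequence of the Abundance Conjecture''. There is nothing further to compare against.

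Your proposal is a correct conditional argument that fleshes out that one-line remark. The forward implication (uniruled $\Rightarrow$ all plurigenera vanish) is unconditional and your sketch via free rational curves is the standard one. For the converse you invoke BDPP (non-uniruled $\Rightarrow$ $K_X$ pseudo-effective) together with Abundance in the form $\kappa(X)=\nu(X)$; this is indeed the expected deduction. You are also right to flag that both inputs are open in the K\"ahler setting --- BDPP is only proved for projective manifolds --- which is exactly why the paper records the statement as a conjecture rather than a theorem. Your write-up is thus entirely consistent with, and more detailed than, what the paper itself offers.
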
 

Consider also the following conjecture:

\begin{Conjecture}\label{crc} Let $X$ be a connected compact K\"ahler manifold. Then $X$ is rationally connected (i.e. any two generic points are joined by some rational curve) if and only if $H^0(X,S^k\Omega^p_X)=0$, for every $k>0$ and $p>0$\footnote{A weaker form, usually attributed to D. Mumford, claims the same conclusion assuming that $H^0(X, (\Omega_X^1)^{\otimes k})=\{0\}$ for all $k>0$.}.
\end{Conjecture}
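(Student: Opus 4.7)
The two implications have very different natures: $(\Rightarrow)$ is unconditional, while $(\Leftarrow)$ is precisely where Conjecture~\ref{cur} enters.

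For $(\Rightarrow)$, suppose $X$ is rationally connected. A general very free rational curve $f:\P^1 \to X$ has $f^*T_X$ ample, hence $f^*(\Omega^1_X)^{\otimes m}$ is anti-ample and has no non-zero global sections. Since very free curves cover $X$, a non-zero section of $(\Omega^1_X)^{\otimes m}$ would pull back non-trivially along some such $f$, giving $H^0(X, (\Omega^1_X)^{\otimes m}) = 0$ for every $m \geq 1$. In characteristic zero, $S^k\Omega^p_X$ is a direct summand of $(\Omega^1_X)^{\otimes kp}$ (antisymmetrize to embed $\Omega^p_X \hookrightarrow (\Omega^1_X)^{\otimes p}$, then symmetrize), so this vanishing propagates to $H^0(X, S^k\Omega^p_X) = 0$ for all $k, p \geq 1$.

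For $(\Leftarrow)$, assume the vanishing hypothesis and Conjecture~\ref{cur}. By Kodaira's criterion (footnote to Theorem~\ref{main theorem}), $X$ is projective. Let $\phi: X \dashrightarrow Y$ be the MRC fibration, with $Y$ a smooth projective model of the base. By the theorem of Graber--Harris--Starr, $Y$ is not uniruled; if $\dim Y = 0$, then $X$ is rationally connected and we are done. Suppose for contradiction $m := \dim Y \geq 1$. Conjecture~\ref{cur} applied to the non-uniruled $Y$ furnishes some $k \geq 1$ and a non-zero $\sigma \in H^0(Y, K_Y^{\otimes k})$; the aim is to pull this back to a contradicting section of $S^k \Omega^m_X$.

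Let $U \subseteq X$ be the domain of definition of $\phi$; since $X$ is smooth and $Y$ projective, $\operatorname{codim}_X(X \setminus U) \geq 2$. The rational map $\phi$ is generically submersive, so $\phi^*\Omega^1_Y \to \Omega^1_U$ is generically injective; taking $m$-th exterior powers yields a generically injective morphism $\phi^* K_Y \to \Omega^m_U$, and raising to the $k$-th symmetric power gives $\phi^*K_Y^{\otimes k} \hookrightarrow S^k\Omega^m_U$ sending $\phi^*\sigma$ to a non-zero section. Since $S^k\Omega^m_X$ is locally free on the smooth manifold $X$, Hartogs' theorem extends this section across $X \setminus U$ to a non-zero $\tau \in H^0(X, S^k\Omega^m_X)$, contradicting the hypothesis. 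Hence $\dim Y = 0$ and $X$ is rationally connected.

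The only deep obstacle is Conjecture~\ref{cur} itself---the uniruledness form of the Abundance Conjecture---which is open in dimension $\geq 4$; everything else (generic injectivity of the pullback, codimension of the indeterminacy locus, Hartogs extension, and the invocation of Graber--Harris--Starr) is routine in the smooth projective setting.
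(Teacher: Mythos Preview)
Your proposal is correct and follows essentially the same route as the paper: the ``only if'' direction is dispatched as easy (the paper does not even spell it out), and the ``if'' direction is reduced to Conjecture~\ref{cur} via the rational quotient (MRC fibration), Graber--Harris--Starr, and the pullback of a pluricanonical section on the non-uniruled base to a section of $S^k\Omega^m_X$. You supply more detail than the paper does---the codimension-$2$ indeterminacy locus and Hartogs extension where the paper simply says the section ``lifts to $X$''---but the architecture is identical.
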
 

In \cite{CDP} a weaker form of Conjecture \ref{crc} is established: $X$ is rationally connected if $H^0(X,S^k\Omega^p_X\otimes A)=0$, for every $k>k(A)$, every $p>0$, and some ample line bundle $A$ on $X$.

For both conjectures, the ``only if" part is easy. The second conjecture implies theorem \ref{main theorem} above, since rationally connected manifolds are simply connected \cite{Campana91}.

Let us show that the first conjecture implies the second. First, a K\"ahler manifold $X$ as in the second conjecture has $h^{2,0}(X)=0$, so it is projective algebraic by Kodaira's projectivity criterion. Now consider the so-called `rational quotient' $r_X:X\dashrightarrow R$ (constructed in \cite{Campana92} and in \cite{KMM}, where it is called the `MRC'-fibration), which has rationally connected fibres and non-uniruled base $R$ (by \cite{GHS}). Assuming that $r:= \dim(R)>0$, we get a contradiction, since by the first conjecture there exists a non-zero $s\in H^0(R,K_R^{\otimes k})$, for some $k>0$, which lifts to $X$ as a non-zero section of $H^0(X,S^k\Omega_X^r)$. Thus $r=0$ and $X$ is rationally connected.

\begin{remark}
For any compact connected K\"ahler manifold, let $r^-(X):= \max\{p\geq 0 \vert \exists k>0 , H^0(X,S^k\Omega_X^p)\neq \{0\}\}.$
Let $r(X):= \dim(R), R$ as above. The preceding arguments show that $r(X)\geq r^-(X)$, and the uniruledness conjecture is equivalent to the equality: $r(X)=r^-(X)$.
\end{remark}

%%%%%%%%%%%%%%%%%%%%%%%%%%%%%%%%%%%%%%%%%%%%%%%%%
\section{Proof of corollary \ref{relative version}}\label{corollary}

The corollary \ref{relative version} is an easy consequence of the theorem \ref{main theorem} and the following, the proof and statement of which are inspired by \cite{Kollar93}, theorem 5.2:

\begin{theorem}\label{relative} Let $f:X\to S$ be a proper holomorphic map with connected fibres between connected complex manifolds. Assume that $X$ admits a K\"ahler metric and that there exists a smooth fibre $X_s$ of $f$ which is simply-connected and satisfies $H^p(X_s, \mathcal{O}_{X_s}) = 0$ for all $p>0$. Then $f_*:\pi_1(X)\to \pi_1(S)$ is an isomorphism of groups.
\end{theorem}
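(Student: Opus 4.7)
The plan is to establish surjectivity and injectivity of $f_* : \pi_1(X) \to \pi_1(S)$ following the approach of \cite{Kollar93}, Theorem 5.2. Surjectivity is standard for a proper map with connected fibres: the connected étale cover of $S$ corresponding to $f_*\pi_1(X)$ must be trivial, since $X$ is connected and $f$ has connected fibres.

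For injectivity, I would exploit the topological structure of $f$ over the smooth locus $U \subset S$. By Ehresmann's theorem, $f : f^{-1}(U) \to U$ is a smooth fibre bundle whose typical fibre is diffeomorphic to $X_s$, hence simply-connected. The long exact sequence in homotopy then gives $\pi_1(f^{-1}(U)) \cong \pi_1(U)$. The complement $f^{-1}(S \setminus U) \subset X$ is a proper analytic subset, so the inclusions induce surjections $\pi_1(f^{-1}(U)) \twoheadrightarrow \pi_1(X)$ and $\pi_1(U) \twoheadrightarrow \pi_1(S)$ with kernels $N_X$ and $N_S$ generated respectively by meridians around the codimension-$1$ components of $f^{-1}(S\setminus U)$ and of $S\setminus U$. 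Under the isomorphism $\pi_1(f^{-1}(U)) \cong \pi_1(U)$, a meridian around a codimension-$1$ component $E \subset f^{-1}(S\setminus U)$ maps to the $m$-th power of the meridian around $f(E)$, where $m$ is the multiplicity of $E$ in the scheme-theoretic fibre $X_{f(E)}$. Hence $f_*$ is an isomorphism if and only if $f$ has no multiple fibres of multiplicity $m>1$ along any codimension-$1$ component of $S\setminus U$.

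The final step, which is also the main obstacle, is to rule out multiple fibres of $f$. Suppose for contradiction that $X_{s_0} = m F_0$ is a multiple fibre with $m > 1$. By the standard local analytic description of multiple fibres (via the cyclic covering structure), a smooth nearby fibre $X_t$ is an $m$-fold étale cover of the reduced fibre $F_0$. Since $X$ is Kähler, the smooth proper family $f|_{f^{-1}(U)}$ has Kähler fibres with constant Hodge numbers by Deligne, so $h^p(X_t, \mathcal{O}_{X_t}) = h^p(X_s, \mathcal{O}_{X_s}) = 0$ for all $p>0$, whence $\chi(\mathcal{O}_{X_t}) = 1$. Lemma \ref{multiplicativity Euler characteristic} then forces $\chi(\mathcal{O}_{F_0}) = 1/m \notin \mathbb{Z}$, a contradiction. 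Consequently there are no multiple fibres, and $f_*$ is an isomorphism.

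The technical heart is the local-analytic description exhibiting the smooth nearby fibre as an étale cover of the reduced multiple fibre; this is the key input from \cite{Kollar93}, and it is where the Kähler hypothesis, combined with the Hodge-theoretic vanishing $H^p(X_s, \mathcal{O}_{X_s}) = 0$, makes the Euler-characteristic obstruction rigorous.
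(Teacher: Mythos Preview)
Your overall reduction---restrict to a transverse disc, observe that the only obstruction to $f_*$ being an isomorphism is a multiple fibre, and then derive a contradiction from $\chi(\mathcal O_{X_t})=1$ via multiplicativity of $\chi$ in an \'etale cover---is exactly the shape of the paper's argument. The gap is in the \'etale cover you invoke. The assertion that ``a smooth nearby fibre $X_t$ is an $m$-fold \'etale cover of the reduced fibre $F_0$'' is not true in general and is not what \cite{Kollar93} supplies. After resolving so that $X_0=\sum_i m_iE_i$ has simple normal crossings, with $m=\gcd(m_i)$, the nearby fibre is an $m_i$-fold cover over the open part of $E_i$, so the degree is not even constant unless all $m_i$ coincide; and near a double point $E_i\cap E_j$ the local model $z_1^{m_i}z_2^{m_j}=\epsilon$ is a smoothing of the coordinate cross, not a covering of it. There is thus no holomorphic \'etale map $X_t\to F_0$ to which Lemma~\ref{multiplicativity Euler characteristic} could be applied, except in the very special situation where $F_0$ happens to be smooth.

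The paper uses a different \'etale cover. Over a transverse disc, \cite{Kollar93}, lemma 5.2.2, gives $\pi_1(X)\cong\Z/d$; let $\pi:\tilde X\to X$ be the universal cover. Then $\pi$ \emph{is} genuinely finite \'etale of degree $d$, so $\tilde X_0^{red}\to X_0^{red}$ is an \'etale degree-$d$ map between (singular, compact) SNC divisors, and Lemma~\ref{multiplicativity Euler characteristic} applies to it. The real work is then to show $\chi(\tilde X_0^{red},\mathcal O_{\tilde X_0^{red}})=1$: this cannot be read off from $\chi(\mathcal O_{X_t})=1$ directly, since the central fibre is singular and non-reduced. Here the K\"ahler hypothesis and the vanishing $H^p(X_t,\mathcal O_{X_t})=0$ enter, via torsion-freeness of the higher direct images $R^p g_*\mathcal O_{\tilde X}$ and Steenbrink's mixed Hodge theory (Lemma~\ref{Steenbrink}), which transfers the vanishing from the smooth total space to the singular reduced central fibre. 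This Hodge-theoretic step, rather than an \'etale comparison between $X_t$ and $F_0$, is the missing ingredient in your argument.
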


\begin{proof}

First observe that all the smooth fibres $X_s$ of $f$ are simply-connected and satisfy $H^p(X_s, \mathcal{O}_{X_s}) = 0$ for all $ p>0$. Indeed, the restriction of $f$ to its smooth locus $S^{o} \subset S$ is topologically a locally trivial fiber bundle by Ehresmann's lemma, and the dimension of $H^p(X_s, \mathcal{O}_{X_s})$ is locally constant for $s \in S^{o} $, as follows from the theory of variations of Hodge structures.

Let us first consider the following special case: $X$ is a connected complex K\"ahler manifold, $f : X \rightarrow \Delta$ is a proper holomorphic map with connected fibres, smooth outside $0 \in \Delta$. Recall that in this situation $X_0$ is a retract of $X$. We have to show that the fundamental group of $X$ (which is isomorphic to $\pi_1(X_0)$) is trivial. By blowing-up $X$, one can ensure that $X_0$ has only simple normal crossings (i.e. the irreducible components of the corresponding reduced divisor are smooth and meet transversally); this does not change the fundamental group of $X$.
By (\cite{Kollar93}, lemma 5.2.2) the fundamental group of $X$ is finite cyclic, say of order $d$. Let $\pi : \tilde{X} \to X$ be a universal cover of $X$ and $g: \tilde{X} \to \Delta$ be the Stein factorization of $f \circ \pi$ so that:  
\begin{align*}
\xymatrix{
\tilde{X} \ar[d]^{\pi} \ar[r]^{g} & \Delta \ar[d]^{t \mapsto t^d}\\
X \ar[r]^{f} & \Delta}
\end{align*}

The fibre $\tilde{X}_t$ of $g$ at any $t \neq 0$ is isomorphic to $X_{t^d}$, hence $H^p(\tilde{X}_t, \mathcal{O}_{\tilde{X}_t}) = H^p(X_{t^d}, \mathcal{O}_{X_{t^d}}) = 0$ for $t \neq 0$ and $p > 0$, and  the sheaves $R^pg_*\mathcal O_{\tilde{X}}$ are generically zero for all $p>0$. Being torsion-free (see \cite{Steenbrink77}, theorem 2.11\footnote{in this reference the morphism is supposed projective but the same proof works for a proper morphism assuming that the total space admits a K\"ahler metric. See also \cite{Peters-Steenbrink}, corollary 11.18.}), they are in fact zero on $\Delta$. Using Leray's spectral sequence, this implies that $ H^p(\tilde{X}, \mathcal{O}_{\tilde{X}}) = H^p( \Delta ,g_*\mathcal O_{\tilde{X}}) = 0 $ for $p> 0$. Applying the lemma \ref{Steenbrink} below, it follows that $H^p(\tilde{X}_0^{red}, \mathcal{O}_{\tilde{X}_0^{red}}) = 0$ for all $p>0$, hence $\chi(\tilde{X}_0^{red}, \mathcal{O}_{\tilde{X}_0^{red}}) = 1$.
By multiplicativity of the holomorphic Euler characteristic in finite \'etale cover (see lemma \ref{multiplicativity Euler characteristic}), $d = 1$ and $X$ is simply-connected.

\begin{lemma}\label{Steenbrink}(Steenbrink, see \cite{Steenbrink83} lemma 2.14 and \cite{Kollar93} lemma 5.2.3)
Let $X$ be a complex K\"ahler manifold and let $D \subset X$ be a reduced divisor such that $D$ as a complex space is proper and has normal crossing only. Assume moreover that $D$ is topologically a retract of $X$. Then the restriction maps $H^p(X, \mathcal{O}_{X})  \to H^p(D, \mathcal{O}_{D}) $ are surjective for all $p \geq 0$.
\end{lemma}

\begin{proof}
Fix a $p \geq 0$. Since $D$ is topologically a retract of $X$, the map $H^p(X, \C)  \to H^p(D, \C) $ is an isomorphism. On the other hand, as $D$ is a union of compact K\"ahler manifolds crossing transversally, $H^p(D, \C)$ admits a canonical mixed Hodge structure (see \cite{Griffiths-Schmid} section 4) whose Hodge filtration $H^p(D, \C) = F^0 H^p(D, \C) \supseteq F^1 H^p(D, \C) \supseteq \cdots $ satisfies $Gr^0_F H^p(D, \C) \cong H^p(D,  \mathcal{O}_{D}) $, see \cite{Steenbrink83} section (1.5). It follows that the map $H^p(D, \C) \to H^p(D, \mathcal{O}_{D})$ is surjective.
The following commutative diagram
\begin{align*}
\xymatrix{
H^p(X, \C) \ar@{=}[d] \ar[r] & H^p(X, \mathcal{O}_{X})  \ar[d]\\
H^p(D, \C) \ar@{->>}[r] & H^p(D, \mathcal{O}_{D})}
\end{align*}
shows that $H^p(X, \mathcal{O}_{X})  \to H^p(D, \mathcal{O}_{D}) $ is surjective.
\end{proof}

We now  reduce the general case to this special case. First, because of the following diagram, theorem \ref{relative} for $f$ follows from the corresponding statement for the restriction of $f$ to an open $U:=S-T$, if the codimension in $S$ of $T$, Zariski closed in $S$, is at least $2$: 
\begin{align*}
\xymatrix{
\pi_1(f^{-1}(U)) \ar@{->>}[d] \ar[r]^{f_*} & \pi_1(U) \ar@{=}[d]\\
\pi_1(X) \ar[r]^{f_*} & \pi_1(S)}
\end{align*}

On the other hand, any $s \in S$ admits a contractible neighborhood $U$ in $S$ such that $f^{-1}(U)$ is homeomorphic to $ U \times f^{-1}(s)$  (see for example \cite{Lê-Teissier}). From this, one easily sees that the theorem \ref{relative} for $f : X \rightarrow S$ follows if all fibres $X_s$ are simply-connected, at least for $s$ outside a codimension $\geq 2$ closed subvariety by the preceding observation. 

Let $D \subset S$ be the proper closed subset of points $s$ for which $X_s$ is not smooth. By removing a codimension $\geq 2$ subvariety of $S$, one can assume that $D$ is a smooth divisor in $S$. Now, an easy application of Sard's lemma shows that for $s \in D$ outside a proper subvariety $Z \subset D$, there exists a small disk $\Delta_s$ crossing $D$ transversally at $s$ such that $f^{-1} (\Delta_s)$ is smooth. For any $s \in D- Z$, the restriction of $f$ to $\Delta_s$ satisfies the assumptions of the special case of theorem \ref{relative} that we showed above, hence $\pi_1(X_s) = \pi_1(  f^{-1} (\Delta_s) ) = \{1\}$.

\end{proof}

Let us now explain how the theorems \ref{main theorem} and \ref{relative} imply the corollary \ref{relative version}. First observe that for fixed $k>0$ and $p>0$, the dimension of $H^0(X_s , (S^k\Omega^p_{X/S})_{|X_s})$ is constant on a non empty Zariski open subset of $S$, and this dimension has to be zero by the flat base change theorem. It follows that $H^0(X_s , S^k \Omega^p_{X_s}) = 0$ for all $k>0$ and $p>0$ for a general smooth fibre $X_s$ of $f$. By theorem \ref{main theorem} this implies that a general smooth fibre of $f$ is simply connected; hence every smooth fibre is simply connected.
The same argument shows that, in particular, for all $p>0$, $h^0(X_s, \Omega^p_{X_s})=0$, for $s\in S$ generic, and so: $h^p(X_s, \mathcal{O}_{X_s})=0$ by Hodge symmetry. We can thus apply theorem \ref{relative} to conclude the proof of corollary \ref{relative version}.

%%%%%%%%%%%%%%%%%%%%%%%%%%%%%%%%%%%%%%%%%%%%%%%%%%%%%%%%%%%%%%%%%%%%%%%%%%%%%%%%%%%%%%%%%%%

\end{document}